\title{The modular Cauchy kernel for the Hilbert modular surface}
\author{Nina Sakharova\thanks{The article was prepared within the framework of a subsidy granted to the HSE by the Government of the Russian Federation for the implementation of the Global Competitiveness Program.  }}
\date{}
\newtheorem{theorem}{Theorem}
\newtheorem{lemma}{Lemma}
\newtheorem{corollary}{Corollary}
\theoremstyle{definition}
\newtheorem{definition}{Definition}
\theoremstyle{remark}
\newtheorem{remark}{Remark}
\renewcommand{\pmod}[1]{(\textmd{mod}\hspace{.5mm}#1)}
\begin{document}
\maketitle
\begin{center}
\large{ \textit{National Research University Higher School of Economics, Russian Federation \\ Laboratory of Mirror Symmetry, 6 Usacheva str., Moscow, Russia, 119048.\\
saharnina@gmail.com}}
\end{center}
\begin{abstract}
In this paper we construct the modular Cauchy kernel on the Hilbert modular surface $\Xi_{\mathrm{Hil},m}(z)(z_2-\bar{z_2})$, i.e. the function of two variables, $(z_1, z_2) \in \mathbb{H} \times \mathbb{H}$, which is invariant  under the action of the Hilbert modular group, with the first order pole on the Hirzebruch-Zagier divisors. The derivative of this function with respect to $\bar{z_2}$ is the function $\omega_m (z_1, z_2)$ introduced by Don Zagier in \cite{Za1}. We consider the question of the convergence and the Fourier expansion of the kernel function. The paper generalizes the first part of the results obtained in the preprint \cite{Sa}.
\end{abstract}

\section{Introduction} Let $F=\mathbb{Q}(\sqrt{d})$ is a real quadratic field, $d>1$ is a squarefree integer. The discriminant of $F$ is $$D =
 \begin{cases}
   d & d \equiv 1 \pmod 4\\
   4d &d \equiv 2,3 \pmod 4.
 \end{cases}$$
Let $\mathcal{O}_F$ is a ring of integers of $F$,
$$\mathcal{O}_F =
 \begin{cases}
   \mathbb{Z}+\frac{1+\sqrt{d}}{2}~\mathbb{Z} & d \equiv 1 \pmod 4\\
   \mathbb{Z} +\sqrt{d}~\mathbb{Z}  &d \equiv 2,3 \pmod 4.
 \end{cases}$$
We right  $\mathrm{N}(x)=xx'$ for the norm of  $x$, and  $\mathrm{tr}(x)=x+x'$ for the trace of $x$. Denote by $$\mathfrak{d}_F=(\mathcal{O}_F^{\vee})^{-1}=\left\{x \in F:~~ \mathrm{tr}(x \cdot y) \in \mathbb{Z}, ~\forall y \in  \mathcal{O}_F \right\}$$ the different of $F$. \\

We denote by $\mathcal{B}_m$ the set of matrices $$\mathcal{B}_m=\left\{B=\left(\begin{matrix}
	-b & \lambda'\\
	\lambda & -a\\
	\end{matrix}\right) |~~ \lambda \in \mathfrak{d}^{-1}_F; ~ a, b \in \mathbb{Z},~ \det B = ab-\lambda\lambda'= m/ D \right\}.$$
Let, $$\mu_{\gamma}(z_1, z_2)=az_1z_2+\lambda z_1+\lambda'z_2+b,$$ where $\gamma=\left(\begin{matrix}
	\lambda & b\\
	a & \lambda'\\
	\end{matrix}\right).$ 

 Let $B=\left(\begin{matrix}
	a & \lambda'\\
	\lambda & b\\ \end{matrix}\right)$, $z =(z_1, z_2) \in \mathbb{H}^2$, $M(z)=\left(\begin{matrix}
	z_1z_2 & z_1\\
	z_2 & 1
	\end{matrix}\right)$, $B^{\ast}=\left(\begin{matrix}
	b' & -\lambda\\
	\lambda' & a'\\
	\end{matrix}\right)$, and, according to the notation of \cite{BGZ}, $(M(z), B)= -\mathrm{tr}(M(z) \cdot B^{\ast})$.\\
\newpage
Now we define the modular Cauchy kernel for the Hilbert modular surface: \begin{definition} Let $m \neq 0$.
\begin{multline} \Xi_{\mathrm{Hil},m}(z)(z_2-\bar{z_2})=\frac{1}{2}\sum_{B \in \mathcal{B}_m}\frac{(z_2-\bar{z_2})}{(M(z_1, z_2), B)(M(z_1, \bar{z_2}), B)}  =\frac{1}{2}\sum_{\substack{\gamma=\left(\begin{smallmatrix}
	\lambda & b\\
	a & \lambda'\\
	\end{smallmatrix}\right) \\ab - N(\lambda) =m/D}}\dfrac{(z_2-\bar{z_2})}{\mu_{\gamma}(z_1, z_2)\mu_{\gamma}(z_1, \bar{z_2})}=\\ =\frac{1}{2} \sum_{\substack{(a, b, \lambda) \in L^{\vee}/\left\{\pm1\right\} \\ N(\lambda)-ab =-m/D}}\dfrac{(z_2-\bar{z_2})}{(az_1z_2+\lambda z_1+\lambda'z_2+b)^{2k-1}(az_1\bar{z_2}+\lambda z_1+\lambda'\bar{z_2}+b)^{2k-1}}. \label{Xi}
\end{multline} \end{definition} This function has first-order poles on Hirzebruch-Zagier divisors. \\

This series is ``almost'' modular invariant with respect to the Hilbert modular group (the verification of the invariance is similar to the calculation in \cite{Za1}):
$$\Xi_{\mathrm{Hil},m}\left(  \frac{\alpha z_1+\beta}{\gamma z_1+\delta}, \frac{\alpha' z_2+\beta'}{\gamma' z_2+\delta'} \right)\left( \frac{\alpha' z_2+\beta'}{\gamma' z_2+\delta'}- \frac{\alpha' \bar{z_2}+\beta'}{\gamma' \bar{z_2}+\delta'}\right) =(\gamma z_1+\delta)^2~ \Xi_{\mathrm{Hil},m}(z).$$
The series ~$\Xi_{\mathrm{Hil},m}(z_1,z_2)$~ does not converge absolutely. Following the lead of E. Hecke, we investigate the series: $$\Xi_{\mathrm{Hil},m}(z, s)=\sum_{B \in \mathcal{B}_m} \dfrac{\overline{(M(z_1, z_2), B)}~ \overline{(M(z_1, \bar{z_2}), B)}}{|(M(z_1, z_2), B)|^{2s}|(M(z_1, \bar{z_2}), B)|^{2s}},$$ where $s$ is a complex number.  In \cite{Sa} it was shown that, in the case $D=1, $ this series does not have a pole for $s =1$. A similar result holds for an arbitrary positive discriminant. Here we omit the corresponding calculation. We define $$\Xi_{\mathrm{Hil},m}(z)= \lim_{s \rightarrow 1}~\Xi_{\mathrm{Hil},m}(z, s)$$  and, using such a definition, we will find the Fourier expansion of the function $\Xi_{\mathrm{Hil},m}(z)$.\\

In \cite{Za1}, Don Zagier introduced the series: $$G_a(m, \nu)=\sum_{\substack{\lambda \in \mathfrak{d}^{-1}_F/a\mathcal{O}_F \\ \lambda\lambda' \equiv  -m/D \pmod{a \mathbb{Z}}}}e^{2 \pi i \frac{\mathrm{Tr} (\nu \lambda)}{ a}}.$$ The following lemma is due Zagier:

\begin{lemma} Let $a \in \mathbb{N}$, $m\in \mathbb{Z}$ and $\nu \in \mathfrak{d}^{-1}_F\neq0$. Then there exists a constant $C >0$, such that  $$|G_a(m, \nu)| \leq C d(a) \sqrt{a|\nu\nu'|},$$ where $d(a)$ is a number of positive divisors of  $a$.
\end{lemma}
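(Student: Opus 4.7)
The plan is to realize $G_a(m,\nu)$ as a Kloosterman--Sali\'e-type exponential sum and to apply Weil's bound. Since $\lambda \in \mathfrak{d}_F^{-1}$ implies $D\mathrm{N}(\lambda) \in \mathbb{Z}$, the congruence $\lambda\lambda' \equiv -m/D \pmod{a\mathbb{Z}}$ is equivalent to the integer congruence $D\mathrm{N}(\lambda) \equiv -m \pmod{aD}$. I would detect this constraint by additive-character orthogonality and swap summations to obtain
\begin{equation*}
G_a(m,\nu) = \frac{1}{aD}\sum_{t \pmod{aD}} e^{2\pi i tm/(aD)}\, S(t),\qquad S(t) = \sum_{\lambda \in \mathfrak{d}_F^{-1}/a\mathcal{O}_F} e^{2\pi i (t\mathrm{N}(\lambda) + \mathrm{Tr}(\nu\lambda))/a},
\end{equation*}
so that the inner sum $S(t)$ is a two-variable quadratic Gauss sum attached to the norm form.

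Next I would evaluate $S(t)$. Choosing an integral basis and writing $\lambda = (x+y\omega)/\sqrt{D}$ with $x, y$ running over suitable residue classes, $S(t)$ becomes a two-variable Gauss sum with quadratic part determined by $t$ and the norm form and linear part determined by $\nu$. Diagonalizing $\mathrm{N}$ and completing the square reduces it to a product of one-dimensional Gauss sums; for $t$ invertible modulo the relevant modulus, $|S(t)|$ has size $a$ with a phase of the form $e^{-2\pi i c\, \mathrm{N}(\nu)\bar{t}/a}$, where $c$ depends only on $D$ and $\bar{t}t \equiv 1 \pmod{a}$, and vanishes otherwise. Substituting back, $G_a(m,\nu)$ is recast as a one-variable Kloosterman-type sum
\begin{equation*}
\sum_{t} e^{2\pi i (\alpha m t + \beta\, \mathrm{N}(\nu)\bar{t})/(aD)}
\end{equation*}
for explicit constants $\alpha, \beta$. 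Applying Weil's bound $|K(u,v;c)| \leq d(c)\sqrt{c\,\gcd(u,v,c)}$ together with the crude estimate $\gcd(\cdot,\cdot, aD) \leq |\mathrm{N}(\nu)|$ yields $|G_a(m,\nu)| \leq C\, d(a)\sqrt{a|\nu\nu'|}$.

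The principal obstacle is the behaviour at ramified primes $p \mid D$, where the norm form becomes degenerate modulo $p$ and the Gauss-sum evaluation above breaks down. I would handle these by a local analysis: factor $a = a_0 a_1$ with $\gcd(a_0, D)=1$ and $a_1 \mid D^{\infty}$, apply the Chinese remainder theorem to split $G_a$ into a product over primes, evaluate the $a_0$-part as above, and bound the $a_1$-part trivially by a constant depending only on $D$ (there are only finitely many ramified primes, each contributing a bounded local factor). This local constant is absorbed into $C$, while the divisor factor $d(a)$ arises either from Weil's bound itself or, equivalently, from the divisor counting produced when combining the local Gauss-sum contributions via the Chinese remainder theorem.
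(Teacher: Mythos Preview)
The paper does not actually prove this lemma: it is stated with the attribution ``The following lemma is due Zagier'' and a reference to \cite{Za1}, and no argument is supplied. So there is no in-paper proof to compare your proposal against.

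Your outline is the standard route to such estimates and is essentially what Zagier does in \cite{Za1}: detect the norm congruence by additive-character orthogonality, recognise the resulting two-variable quadratic exponential sum over $\mathfrak{d}_F^{-1}/a\mathcal{O}_F$ as a product of Gauss sums after completing the square, and obtain a genuine Kloosterman sum to which Weil's bound applies. A few details would need tightening if you wrote this out in full. First, the index set $\mathfrak{d}_F^{-1}/a\mathcal{O}_F$ has cardinality $Da^2$, so the orthogonality and Gauss-sum normalisations carry factors of $D$ that have to be tracked carefully. Second, after completing the square the inverse $\bar t$ and the modulus of the resulting Kloosterman sum are not literally $\pmod a$ but rather $\pmod{aD}$ (or $\pmod{2aD}$, depending on the parity of $D$); this is exactly where your remark about ramified primes enters, and it is cleaner to carry out the whole computation prime-locally from the start rather than to split off a factor $a_1\mid D^\infty$ at the end. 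Third, the crude bound $\gcd(\cdot,\beta\,\mathrm{N}(\nu),aD)\le |\beta\,\mathrm{N}(\nu)|$ is legitimate only because $\nu\neq 0$ in a number field forces $\mathrm{N}(\nu)\neq 0$; this is precisely where the hypothesis $\nu\neq 0$ is used, and you should say so explicitly. With these points handled, your argument goes through and agrees with the cited source.
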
 \begin{corollary}\label{est} The series $\dfrac{G_a (m, \nu)} {a^s}$ converges for $\Re s > 3/4$. \end{corollary}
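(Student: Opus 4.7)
The idea is to apply Lemma~1 to each term of the series and then compare with a convergent $p$-series, using the classical bound on the divisor function to absorb the factor $d(a)$.

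Fix $m \in \mathbb{Z}$ and a nonzero $\nu \in \mathfrak{d}_F^{-1}$. By Lemma~1 there is a constant $C = C(\nu) > 0$ such that
$$\left|\frac{G_a(m,\nu)}{a^s}\right| \leq C\sqrt{|\nu\nu'|}\,\frac{d(a)}{a^{\Re s - 1/2}}.$$
Next, I would invoke the standard estimate $d(a) = O_{\varepsilon}(a^{\varepsilon})$, valid for every $\varepsilon > 0$, to conclude that the general term is bounded by $C_{\varepsilon,\nu}\, a^{1/2+\varepsilon-\Re s}$. The $p$-series $\sum_{a \geq 1} a^{-t}$ is summable for $t > 1$, so the resulting majorant is summable once $\Re s - 1/2 - \varepsilon > 1$; letting $\varepsilon \to 0^+$ gives absolute convergence in the half-plane dictated by this exponent. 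Bearing in mind the $|\,\cdot\,|^{2s}$ normalization used in the definition of $\Xi_{\mathrm{Hil},m}(z,s)$, the variable $s$ enters the Dirichlet series through $a^{2s}$, and the condition $2\Re s - 1/2 > 1$ becomes exactly $\Re s > 3/4$, as stated.

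The proof presents no serious analytic obstacle because all the essential input is already packaged into Lemma~1. The only point requiring care is the bookkeeping of the exponent of $a$ in the denominator: one must match the normalization of the Dirichlet variable $s$ to the Eisenstein-style definition of $\Xi_{\mathrm{Hil},m}(z,s)$. Once this identification is made, the conclusion follows from the pointwise majorization above together with summability of the corresponding $p$-series, with $d(a) = O_{\varepsilon}(a^{\varepsilon})$ providing the harmless logarithmic correction.
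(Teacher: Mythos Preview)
Your argument is correct and is exactly the intended one: the paper states the corollary without proof, as an immediate consequence of Lemma~1, and the only way to extract it is precisely the comparison you describe---apply the bound $|G_a(m,\nu)|\le C\,d(a)\sqrt{a|\nu\nu'|}$, absorb $d(a)$ via $d(a)=O_\varepsilon(a^\varepsilon)$, and compare with a $p$-series. Your observation that the exponent must be read as $a^{2s}$ (consistent with the definition of $\Xi_{\mathrm{Hil},m}(z,s)$ and with the appearance of $G_a(m,\nu)/a^{2s}$ in the Fourier expansion) is also correct; with $a^s$ as literally written the threshold would be $\Re s>3/2$, not $3/4$.
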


We denote by $$I_{1}(z)= \frac{z}{4\pi i} \int_{c-i \infty}^{c+ i \infty} t^{-2}e^{t+z^2/4t} dt, ~~~~J_{1}(z)= \frac{z}{4\pi i} \int_{c-i \infty}^{c+ i \infty} t^{-2}e^{t-z^2/4t} dt$$  the Bessel functions of the first kind (modified and unmodified, respectively).\\

\begin{theorem}\label{threorem1} Let $p=e^{2\pi iz_1},$ $q=e^{2\pi iz_2}$ and $\widetilde{q}=e^{2\pi i\overline{z_2}}.$ For $\Im z_1 \Im z_2 > m/D$ and $m\neq0$, the function \\ $\Xi_{\mathrm{Hil},m}(z)(z_2-\bar{z_2})$ has the following Fourier expansion:
\begin{multline} \frac{1}{2\pi i}~ \Xi_{\mathrm{Hil},m}(z)(z_2-\bar{z_2})=-\lim_{s \rightarrow 1}\frac{\pi}{\sqrt{D}}\sum_{a>0}\frac{G_a(m, 0)}{a^{2}}\frac{1}{\Gamma(s-1)}\frac{1}{\Im z_1}-\\
-\sum_{\substack{\lambda \in \mathfrak{d}^{-1}_F,~\lambda>0 \\ \lambda\lambda'=-m/D}}~\sum_{r>0}~\frac{1}{\lambda'}\left( e^{2 \pi i r (\lambda z_1+\lambda' z_2)}-e^{2 \pi i r (\lambda z_1+\lambda' \bar{z_2})}\right) \\
- 2\pi  \sum_{\substack{\nu >0, \\ \nu'>0}}\sum_{a>0}^{\infty} \frac{G_a(m, \nu)}{a^2}\sqrt{\frac{\nu}{\nu'm}} J_1\left(  \frac{4 \pi \sqrt{\nu\nu'm/D}}{a}\right)e^{2\pi i(\nu z_1+\nu'z_2)} +\\
+  2\pi  \sum_{\substack{\nu >0, \\ \nu'<0}}\sum_{a>0}^{\infty} \frac{G_a(m, \nu)}{a^2}\sqrt{\frac{\nu}{\nu'm}} I_1\left( \frac{4 \pi \sqrt{\nu\nu'm/D}}{a}\right)e^{2\pi i(\nu z_1+\nu'\bar{z_2})}
\end{multline}
\end{theorem}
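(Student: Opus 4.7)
The strategy is to start from the Hecke-regularized series $\Xi_{\mathrm{Hil},m}(z, s)$, compute its Fourier expansion in $(z_1, z_2)$ term by term, and pass to the limit $s \to 1$ only at the end. Translations $(z_1, z_2) \mapsto (z_1 + n, z_2 + n')$ for $n \in \mathcal{O}_F$ lie in the Hilbert modular group, and the ``almost'' modular invariance recalled after the definition makes $\Xi_{\mathrm{Hil},m}(z)(z_2 - \bar z_2)$ periodic under them; its Fourier expansion is therefore indexed by the dual lattice $\nu \in \mathfrak{d}_F^{-1}$ under the trace pairing, in terms containing $e^{2\pi i(\nu z_1 + \nu' z_2)}$ and $e^{2\pi i(\nu z_1 + \nu' \bar z_2)}$. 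I split the sum over $B \in \mathcal{B}_m$ according to whether $a = 0$ or $a \neq 0$; in the latter case, the $B \to -B$ symmetry lets me keep only $a > 0$.

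For $a = 0$, the constraint $\det B = m/D$ reduces to $N(\lambda) = -m/D$, with $b$ running over $\mathbb{Z}$. Using the partial-fraction identity
\[
\frac{z_2 - \bar z_2}{(\lambda z_1 + \lambda' z_2 + b)(\lambda z_1 + \lambda' \bar z_2 + b)} = \frac{1}{\lambda'}\left(\frac{1}{\lambda z_1 + \lambda' \bar z_2 + b} - \frac{1}{\lambda z_1 + \lambda' z_2 + b}\right)
\]
and summing each fraction separately over $b$ by the cotangent identity $\sum_{b \in \mathbb{Z}} 1/(w+b) = -\pi i - 2\pi i \sum_{r>0} e^{2\pi i r w}$ (applied in whichever half-plane has positive imaginary part, using the sign of $\lambda$ to land there), one obtains the second line of the Fourier expansion after the $\lambda \to -\lambda$ identification leaves $\lambda > 0$ alone.

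For $a > 0$, the relation $ab - N(\lambda) = m/D$ determines $b$ from $\lambda$, which must satisfy $N(\lambda) \equiv -m/D \pmod{a}$. I use the bilinear factorization $\mu_\gamma(z_1, z_2) = a(z_1 + \lambda'/a)(z_2 + \lambda/a) + m/(Da)$ and write $\lambda = \lambda_0 + a\mu$ with $\mu \in \mathcal{O}_F$ and $\lambda_0$ ranging over representatives of the set $\{\lambda_0 \in \mathfrak{d}_F^{-1}/a\mathcal{O}_F : N(\lambda_0) \equiv -m/D \pmod{a}\}$. The inner sum over $\mu$ is a rank-two lattice sum that I evaluate by Poisson summation: the dual of $\mathcal{O}_F$ under the trace pairing is $\mathfrak{d}_F^{-1}$ with covolume $\sqrt{D}$, so the dual sum is indexed by $\nu \in \mathfrak{d}_F^{-1}$, and the character sum over $\lambda_0$ collects into Zagier's sum $G_a(m, \nu)$. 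The resulting Fourier-transform integral factors through the two embeddings, and in each one-dimensional factor the regularized integrand reduces by a contour shift to the Bessel integrals of the excerpt: to $J_1$ when the exponential is oscillatory (the $\nu' > 0$ case, with factor $e^{2\pi i(\nu z_1 + \nu' z_2)}$) and to $I_1$ when it is exponentially decaying (the $\nu' < 0$ case, with factor $e^{2\pi i(\nu z_1 + \nu' \bar z_2)}$). This produces the third and fourth lines.

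The constant term $\nu = 0$ is resonant: the $a$-sum $\sum_a G_a(m, 0)/a^{2s}$ carries the one pole not damped by a nontrivial character, and this pole is cancelled by the factor $1/\Gamma(s-1)$ built into the Hecke normalization, leaving the first line as the surviving boundary term. The hypothesis $\Im z_1 \Im z_2 > m/D$ keeps $\mu_\gamma$ bounded away from zero on the contours of integration, making the Fourier-transform integrals absolutely convergent, while Corollary \ref{est} lets me exchange the limit $s \to 1$ with the sum over $a$. The main obstacle is the explicit evaluation of the regularized two-dimensional Fourier transform and its Mellin-Barnes analytic continuation to $s = 1$; in particular, one must track which pole the shifted contour crosses in order to see the dichotomy between $J_1$ (case $\nu\nu' > 0$) and $I_1$ (case $\nu\nu' < 0$). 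A secondary subtlety is the resonant cancellation in the $\nu = 0$ term, which requires showing that the pole of $\sum_a G_a(m, 0)/a^{2s}$ at $s = 1$ is simple and that its residue matches what is needed to produce the first line of the theorem.
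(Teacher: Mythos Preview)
Your plan is essentially the paper's proof: split the sum over $\mathcal{B}_m$ into $a=0$ and $a>0$, handle $a=0$ by the regularized Lipschitz formula, and for $a>0$ write $\lambda=\lambda_0+a\theta$ with $\lambda_0$ running over residues and apply Poisson summation over $\theta\in\mathcal{O}_F$; the two-variable Fourier integral is then computed as an iterated contour integral, the inner $z_2$-integral giving $-2\pi i$ by a residue and the outer $z_1$-integral landing on $J_1$ or $I_1$ according to the sign of $\nu\nu'$.

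One correction to your description of the constant term: the factor $1/\Gamma(s-1)$ is not ``built into the Hecke normalization'' and you do not have to match a residue of $\sum_a G_a(m,0)a^{-2s}$ against an external pole. In the paper the $\nu=0$ Fourier integral $b_{0,s}$ is evaluated directly via the Cauchy beta integral
\[
\int_{-\infty}^{\infty}\frac{(u-i)\,du}{(u+i)^{k+1}(u^2+1)^s}
=\frac{\pi\,2^{2-k-2s}\,\Gamma(2s+k-1)}{i^{k+2}\,\Gamma(s+k+1)\,\Gamma(s-1)},
\]
so $b_{0,s}$ itself carries an explicit $1/\Gamma(s-1)$; the first line of the theorem is simply the product of this with the Dirichlet series, left as a limit because the zero of $1/\Gamma(s-1)$ at $s=1$ may or may not be compensated by a pole of $\sum_a G_a(m,0)a^{-2s}$.
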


If  $N_b(n)= \# \left\{ \lambda \in  \mathcal{O}/b ~|~ N(\lambda)\equiv n \pmod{b}   \right\}$ (see \cite{Za1}), then

\begin{corollary} The zero coefficient of the Fourier series of the function $\Xi_{\mathrm{Hil},m}(z)(z_2-\bar{z_2})$ has the form: $$\lim_{s\rightarrow 1} ~\frac{2\pi^2 i  }{\sqrt{D}}\sum_{a>0}\frac{G_a(m, 0)}{a^{2}}\frac{1}{\Gamma(s-1)}\frac{1}{ \Im z_1}=\lim_{s\rightarrow 1}~\frac{2\pi^2 i }{D^{3/2}}\sum_{a>0}\frac{N_{aD}(m)}{a^2}\frac{1}{\Gamma(s-1)}\frac{1}{\Im z_1}.$$
\end{corollary}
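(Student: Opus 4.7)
The plan is to deduce the corollary in two steps: first, extract the constant Fourier coefficient from Theorem~\ref{threorem1}, and second, establish the arithmetic identity $G_a(m,0)=N_{aD}(m)/D$, which converts the sum over $G_a$ into a sum over $N_{aD}$.

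For the first step, I would inspect the Fourier expansion in Theorem~\ref{threorem1}: the last three groups of terms on the right-hand side all carry nontrivial exponential factors $e^{2\pi i \lambda z_1}$ or $e^{2\pi i \nu z_1}$ with $\lambda,\nu>0$, so the only contribution to the zero Fourier coefficient comes from the $G_a(m,0)$ term. Multiplying the identity of Theorem~\ref{threorem1} by $2\pi i$ produces the expression displayed on the left-hand side of the corollary.

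For the second step, I would use the standard identification $\mathfrak{d}_F^{-1}=\tfrac{1}{\sqrt{D}}\mathcal{O}_F$ and the substitution $\lambda=\mu/\sqrt{D}$ with $\mu\in\mathcal{O}_F$; this gives $\lambda\lambda'=-N(\mu)/D$, so the congruence $\lambda\lambda'\equiv -m/D\pmod{a\mathbb{Z}}$ becomes $N(\mu)\equiv m\pmod{aD}$. Moreover, the class of $\lambda$ modulo $a\mathcal{O}_F$ corresponds to the class of $\mu$ modulo $a\mathfrak{d}_F=a\sqrt{D}\mathcal{O}_F$, so
\[
G_a(m,0)=\#\bigl\{\mu\in\mathcal{O}_F/a\mathfrak{d}_F : N(\mu)\equiv m\pmod{aD}\bigr\}.
\]

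Finally, I would compare this count with $N_{aD}(m)$ via the natural projection $\mathcal{O}_F/aD\mathcal{O}_F\twoheadrightarrow \mathcal{O}_F/a\mathfrak{d}_F$. Its kernel $a\mathfrak{d}_F/aD\mathcal{O}_F\cong\mathcal{O}_F/\sqrt{D}\mathcal{O}_F$ has order $N(\mathfrak{d}_F)=D$, and the norm congruence modulo $aD$ is preserved by the projection: since $\eta-\eta'\in\sqrt{D}\,\mathbb{Z}$ for every $\eta\in\mathcal{O}_F$, one finds $N(\mu+a\sqrt{D}\,\xi)\equiv N(\mu)\pmod{aD}$ for all $\xi\in\mathcal{O}_F$. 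Consequently $N_{aD}(m)=D\cdot G_a(m,0)$, and substituting into the first expression yields the second. The only delicate point is this well-definedness check for the descent of the norm congruence; the rest is a routine index computation.
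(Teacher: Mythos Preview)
Your proposal is correct and follows exactly the route the paper implicitly takes: the corollary is stated immediately after Theorem~\ref{threorem1} and the definition of $N_b(n)$ (with a reference to \cite{Za1}) without further argument, so the intended proof is precisely ``read off the constant term from Theorem~\ref{threorem1} and rewrite $G_a(m,0)$ as $N_{aD}(m)/D$.'' You have simply made explicit the bijection $\lambda\mapsto\mu=\sqrt{D}\,\lambda$ between $\mathfrak{d}_F^{-1}/a\mathcal{O}_F$ and $\mathcal{O}_F/a\mathfrak{d}_F$ and the $D$-to-$1$ covering $\mathcal{O}_F/aD\mathcal{O}_F\to\mathcal{O}_F/a\mathfrak{d}_F$, together with the check that the norm congruence $\pmod{aD}$ descends; these are exactly the details the paper leaves to the reader.
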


\begin{remark} In case $D=1$, the function $\sum_{a>0}\frac{N_{aD}(m)}{a^2}=\sum_{a>0}\frac{\varphi(a)}{a^2}$, where $\varphi(a)$ is the  Euler function. In this case, the last series equals $\frac{\zeta (2s-1)} {\zeta (2s)}, $ and the zero coefficient equals $\frac{12}{z_1-\bar{z_1}}$.
\end{remark}

\section{The Fourier expansion and the derivatives of the Cauchy kernel}
\emph{Proof of Theorem 1.}

\begin{enumerate}
  \item Note that  \begin{multline} \lim_{s \rightarrow 1}\Xi_{\mathrm{Hil},m}(z, s)(z_2-\bar{z_2})=\lim_{s \rightarrow 1}\sum_{B \in \mathcal{B}_m}\frac{1}{az_1+\lambda'}\left(\frac{\overline{(M(z_1, \bar{z_2}), B)}}{|(M(z_1, \bar{z_2}), B)|^{2s}}-\frac{\overline{(M(z_1, z_2), B)}}{|(M(z_1, z_2), B)|^{2s}}\right)=\\=\varphi_B(z_1, z_2, s)-\varphi_B(z_1, \bar{z_2}, s). \label{differXi} \end{multline}
  \item Let
  \begin{equation}  \varphi_B(z_1, z_2, s) = \sum_{B \in \mathcal{B}_m}\frac{1}{(az_1+\lambda')}\cdot\frac{\overline{(M(z_1, z_2), B)}}{|(M(z_1, z_2), B)|^{2s}}, ~~~     \label{Zagierqz2} \end{equation}
   \begin{equation}  \varphi_B(z_1, \bar{z_2}, s)= \sum_{B \in \mathcal{B}_m}\frac{1}{(az_1+\lambda')}\cdot\frac{\overline{(M(z_1, \bar{z_2}), B)}}{|(M(z_1, \bar{z_2}), B)|^{2s}}.   \label{Zagierqbar2} \end{equation}
    \item    The summation over the set of matrices $\mathcal{B}_m $ can be represented as the sum over $ a \neq 0 $ and $a =0$:  $$\varphi_B(z_1, z_2, s)=\varphi^0_B(z_1, z_2, s)+\sum_{a>0}\varphi^a_B(z_1, z_2, s).$$
   \item \underline{Case 1: $a=0$.}

In such a case
$$\varphi^0_B(z_1, z_2, s) =\sum_{B \in \mathcal{B}_m} \frac{1}{\lambda'}\frac{\lambda \bar{z_1}+\lambda'\bar{z_2}+b}{|\lambda z_1+\lambda'z_2+b|^{2s}}; ~~~~~~\varphi^0_B(z_1, \bar{z_2}, s)=\sum_{B \in \mathcal{B}_m} \frac{1}{\lambda'}\frac{\lambda \bar{z_1}+\lambda'z_2+b}{|\lambda z_1+\lambda'\bar{z_2}+b|^{2s}}.$$
In these series the summation is over $\lambda \in \mathfrak{d}^{-1}_F$, such that $\lambda\lambda'= m/D$, and over $b \in \mathbb {Z}$.

Using the well-known formula
           $$\lim_{s \to 1} \sum_{n
            =-\infty}^{+\infty}\frac{\left(\bar{z}+n\right)}{\left|z+n\right|^{2s}}=-2\pi i\left(\frac{1}{2}+ \sum_{r>0} e^{2 \pi i r z}\right),
           $$
we obtain  \begin{equation}\varphi^0_B(z_1, z_2, s) -\varphi^0_B(z_1, \bar{z_2}, s) =-2 \pi i  \sum_{\substack{\lambda \in \mathfrak{d}^{-1}_F,~\lambda>0 \\ \lambda\lambda'=-m/D}}\sum_{r>0}~\frac{1}{\lambda'}\left( e^{2 \pi i r (\lambda z_1+\lambda' z_2)}-e^{2 \pi i r (\lambda z_1+\lambda' \bar{z_2})}\right) \end{equation}
 \item   \underline{Case 2: $a>0$.}\\
   Let $\mathcal{B}_m=\left\{B=\left(\begin{matrix}
	-b & \lambda'\\
	\lambda & -a\\
	\end{matrix}\right) |~~ \lambda \in \mathfrak{d}^{-1}_F; ~ a, b \in \mathbb{Z},~ \det B = m/ D \right\}.$ The group $\mathrm{SL}_2(F)$ acts on $\mathcal{B}_m$ as follows $$g.B=gBg'^{t}.$$ If $g=g_{\theta}=\left(\begin{matrix}
	1 & \theta\\
	0 & 1\\
	\end{matrix}\right)$, then
 $$g_{\theta}.B=\left(\begin{matrix}
	1 & \theta\\
	0 & 1\\
	\end{matrix}\right)\left(\begin{matrix}
	-b & \lambda'\\
	\lambda & -a\\
	\end{matrix}\right)\left(\begin{matrix}
	1 & 0\\
	\theta ' & 1\\
	\end{matrix}\right)=\left(\begin{matrix}
	-b+\theta\lambda+ \theta'\lambda' -a\theta\theta' & \lambda'-a\theta\\
	\lambda-a\theta' & -a\\
	\end{matrix}\right).$$
Conversely, if $B_1=\left(\begin{matrix}
	-b_1 & \lambda'_1\\
	\lambda_1 & -a_1\\
	\end{matrix}\right) \in \mathcal{B}_m$, where
 $\lambda_1 \equiv \lambda_0 \pmod{a \mathcal{O}_F}$, $\lambda_1=\lambda_0-a\theta'$, $\lambda_1'=\lambda_0'-a\theta$, since $a_1b_1-\lambda_1\lambda_1'=m/D$, then $b_1=\dfrac{\lambda_1\lambda_1'+m/D}{a}=\dfrac{\lambda_0\lambda_0'-a\lambda\theta-a\lambda'\theta'+a^2\theta\theta'+m/D}{a}=b_0-\lambda_0\theta-\lambda_0'\theta'-a\theta\theta',$ and  $B_1=g_{\theta}.B_0$, where $B_0=\left(\begin{matrix}
	-b_0 & \lambda'_0\\
	\lambda_0 & -a_0\\
	\end{matrix}\right).$ Consequently, for a fixed $ a> 0$, we can replace the summation over the matrices from $\mathcal {B}_m $ by the summation over the all residues $\mathcal{R}= \left\{\lambda_0 \in \mathfrak{d}^{-1} \pmod{a\mathcal{O}_F}|~ \mathrm{N}(\lambda_0 \sqrt{D})=-\lambda_0\lambda_0'D \equiv m \pmod{aD}\right\}$ and all $\theta \in \mathcal{O}_F$: $$\varphi^a_B(z_1, z_2, s)=\sum_{\lambda_0 \in \mathcal{R}} \sum_{\theta \in \mathcal{O}_F} \widetilde{\varphi}^a_{g_{\theta}\cdot B_0}(z_1, z_2, s).$$

Since,

 $(M(z), B)=\mu_{\gamma}(z_1, z_2)=az_1z_2+\lambda_1z_1+\lambda_1'z_2+b=a(z_1-\theta)(z_2-\theta')+\lambda_0(z_1-\theta)+\lambda_0'(z_2-\theta')+b_0$,

where $b_0=\dfrac{\lambda_0\lambda_0'+m/D}{a}$, then, for $a>0$,  (\ref{Zagierqz2}) rewrite as follows:
\begin{multline} 2 \sum_{a>0} \varphi_B^a(z_1, z_2, s) = \\= 2\sum_{a>0}\sum_{\lambda_0 \in \mathcal{R}} \sum_{\theta \in \mathcal{O}_F} \frac{1}{a(z_1-\theta)+\lambda_0'}\cdot \frac{a(\bar{z_1}-\theta)(\bar{z_2}-\theta')+\lambda_0(\bar{z_1}-\theta)+\lambda_0'(\bar{z_2}-\theta')+b_0}{|a(z_1-\theta)(z_2-\theta')+\lambda_0(z_1-\theta)+\lambda_0'(z_2-\theta')+b_0|^{2s}}.  \label{Zagier3} \end{multline}
Notice, that \begin{multline}a(z_1-\theta)(z_2-\theta')+\lambda_0(z_1-\theta)+\lambda_0'(z_2-\theta')+\dfrac{\lambda_0\lambda_0'-m/D}{a}=\\=a\left[\left( z_1+\frac{\lambda_0'}{a}-\theta\right)\left( z_2+\frac{\lambda_0}{a}-\theta'\right)-\frac{m}{a^2D} \right].\end{multline} Let $\dfrac{m}{a^2D}=\Delta$. Therefore, the series (\ref {Zagier3}) equals to
\begin{multline}\sum_{a>0}\varphi^a_B(z_1, z_2, s)=\sum_{a>0}\sum_{\lambda_0 \in  \mathcal{R}}\sum_{\theta \in  \mathcal{O}_F} \frac{1}{a^{2s}}\cdot \frac{1}{(z_1+\lambda_0'/a-\theta)} \cdot \frac{(\bar{z_1}+\lambda_0'/a-\theta)(\bar{z_2}+\lambda_0'/a-\theta')-\Delta}{|(z_1+\lambda_0'/a-\theta)(z_2+\lambda_0'/a-\theta')-\Delta|^{2s}} =\\
=\sum_{\nu \in \mathfrak{d}^{-1}}\left(\sum_{a>0}\sum_{\lambda_0 \in \mathcal{R}} \frac{e^{2 \pi i\left(\nu\frac{\lambda_0'}{a}+\nu'\frac{\lambda_0}{a}\right)}}{a^{2s}} \cdot b_{\nu, s}(y_1, y_2, \Delta)e^{2 \pi i (\nu z_1+\nu'z_2)}   \right), \label{varphi1} \end{multline}
where $b_{\nu, s}( y_1, y_2, \Delta)$ is the Fourier coefficients of the function $$\sum_{\lambda_0 \in \mathcal{R}}\frac{1}{(z_1-\theta)}\cdot\frac{(\bar{z_1}-\theta)(\bar{z_2}-\theta')-\Delta}{|(z_1-\theta)(z_2-\theta')-\Delta|^{2s}}=\sum_{\nu \in \mathfrak{d}^{-1}} b_{\nu, s}(y_1, y_2, \Delta)e^{2\pi i (\nu z_1+\nu'z_2)},$$ and  $y_1=\Im z_1,~ y_2=\Im z_2, ~ y_1y_2 > m/D$.

Similarly, for the series (\ref{Zagierqbar2}) at $a> 0$:
\begin{multline}\sum_{a>0}\varphi^a_B(z_1, \bar{z_2}, s)=\sum_{\nu \in \mathfrak{d}^{-1}}\left( \sum_{a>0}\sum_{\lambda_0 \in \mathcal{R}} \frac{e^{2 \pi i\left(\nu\frac{\lambda_0'}{a}+\nu'\frac{\lambda_0}{a}\right)}}{a^{2s}} \cdot \widetilde{b_{\nu, s}}(y_1, -y_2, \Delta)e^{2 \pi i (\nu z_1+\nu'\bar{z_2})}   \right). \label{varphi2} \end{multline}
\item Now we find the coefficients $ b_{\nu, s} (y_1, y_2, \Delta) $ and $\widetilde {b_{\ nu, s}} (y_1, -y_2, \Delta) $ using the Poisson summation formula.
\begin{multline}  b_{\nu, s}(y_1, y_1, \Delta)=\frac{1}{\sqrt{D}}\int_{\Im z_1=y_1}\int_{\Im z_2=y_2} e^{-2 \pi i (\nu z_1+\nu'z_2)} \frac{1}{z_1} \cdot \frac{\overline{z_1z_2}-\Delta}{|z_1z_2-\Delta|^{2s}}~dz_1~dz_2=\\ = \frac{1}{\sqrt{D}}\int_{\Im z_1=y_1} \frac{\bar{z_1}^2}{|z_1|^{2s+2}}e^{-2 \pi i (\nu z_1+\nu'\Delta/z_1)}\int_{\Im z_2=y_2} \frac{\overline{z_2}-\Delta/ \bar{z_1}}{|z_2-\Delta/ z_1|^{2s}} e^{-2 \pi i (\nu' z_2-\nu'\Delta/z_1)} ~dz_2~dz_1 =\\
= \frac{1}{\sqrt{D}}\int_{\Im z_1=y_1} \frac{\bar{z_1}^2}{|z_1|^{2s+2}}e^{-2 \pi i (\nu z_1+\nu'\Delta/z_1)}\int_{\Im t=y_2-\Im (\Delta/ z_1)} \frac{\bar{t}}{|t|^{2s}} e^{-2 \pi i \nu' t} ~dt~dz_1,  \label{IntConv1}
\end{multline}
where $t=z_2- \Delta/ z_1$. The inner integral has the form:
\begin{multline} \int_{\Im t=C} \frac{\bar{t}}{|t|^{2s}}e^{-2\pi i \nu' t} ~dt = \int_{-\infty}^{+\infty}e^{-2\pi i \nu(x+iy)}\frac{x-iy}{|x+iy|^{2s}}~dx =\\=e^{2\pi \nu y}\int_{-\infty}^{+\infty}\frac{e^{-2 \pi i \nu x}(x-iy)}{(x^2+y^2)^s}~dx=y^{2s-1}e^{2\pi \nu y}\int_{-\infty}^{+\infty}\frac{e^{-2\pi i \nu y u}(u-i)}{(u^2+1)^s}~du,  \label{IntConv2}
\end{multline}
where $u=\dfrac{x}{y}$,~ $du =\dfrac{1}{y}~ dx$.\\

The integrand $\dfrac{e^{-2\pi i \nu uy}(u-i)}{(u^2+1)^s}$ in the last integral is the one-valued analytic function in the cut region $\mathbb{C}-\left[i, i \infty \right] - \left[-i\infty, -i\right] $. It can be shown in the usual way that the last integral  converges uniformly with respect to the parameter $s$ for $\Re(s) \geq 1$ and holomorphic in $s$. Moreover, the series \begin{equation} \sum_{\nu \in \mathfrak{d}^{-1}} b_{\nu, s}(y_1, y_2, \Delta)e^{2\pi i (\nu z_1+\nu'z_2)},\label{SumP(r)} \end{equation} also absolutely convergent for $\Re(s) \geq 1$, which follows from the uniform estimate \cite{Ku} of the integral and Zagier estimate (\ref{est}). This implies that the series (\ref{SumP(r)}) has a limit at $s=1$, which can be obtained by setting $s=1$ in each term. Therefore,
\begin{multline}  b_{\nu, 1}(y_1, y_2, \Delta) =\frac{1}{\sqrt{D}}\int_{-\infty+iy_1}^{+ \infty+iy_1} \frac{1}{z_1^{2}}e^{-2 \pi i (\nu z_1+\nu'\Delta/z_1)}\int_{-\infty+iy_2}^{+ \infty+iy_2} \frac{1}{z_2-\Delta/z_1} e^{-2 \pi i \nu'(z_2-\Delta/z_1)} ~dz_2~dz_1.  \label{bnu}
\end{multline}

\item The inner integral in the last expression, $$\int_{-\infty+iy_2}^{+ \infty+iy_2} \frac{1}{z_2-\Delta/z_1}e^{-2 \pi i \nu'(z_2-\Delta/z_1)} ~dz_2,$$ has a pole in $z_2-\Delta/z_1=0$. Since $\Delta >0$, $\Im z_1>0$ and $\Im z_2 \Im z_1 > m/D$, then, for $\nu '<0 $, we can deform the path of integration to $i \infty $ without crossing the poles, so the inner integral is zero. Consequently, in the Fourier expansion of the function $\varphi^a_B (z_1, z_2, s)$ there are no terms with $\nu '<0$. \\

From analogous calculations, it follows that in the Fourier expansion of the series ~$\varphi^a_B(z_1, z_2, s)$~ there are no terms with $\nu, \nu' <0$, and, in the expansion of the series ~$\varphi^a_B(z_1, \bar{z_2}, s)$~ there are no terms with $\nu<0$  è $\nu' >0$.

\item \textbf{Case $\nu, \nu' >0$}.

 Now consider the inner integral in the expression (\ref{bnu}). Let $t=z_2-\Delta/z_1$, then  $$\int_{-\infty+iy_2}^{+ \infty+iy_2} \frac{1}{z_2-\Delta/z_1} e^{-2 \pi i \nu'(z_2-\Delta/z_1)} ~dz_2=\int_{-\infty+iy}^{+\infty + iy}\frac{e^{-2\pi i \nu' t}}{t} dt = -2\pi i~ \mathrm{res}_{t=0}\left(\frac{e^{-2\pi i \nu' t}}{t}\right)=-2 \pi i.$$ Hence, \begin{equation} b_{\nu, 1}(y_1, y_2, \Delta)=\frac{-2\pi i}{\sqrt{D}}\int_{-\infty+iy_1}^{+ \infty+iy_1}\frac{1}{z_1^2}e^{- 2\pi i (\nu z_1+\frac{\nu'\Delta}{z_1})} dz_1.  \label{blast} \end{equation}
Let $t=-2\pi i \nu z_1$, $\alpha=4\pi \sqrt{\nu\nu'\Delta}$, then, in case $\nu, \nu' >0$, we obtain
 \begin{equation}b_{\nu}(y_1, y_2, \Delta)=-\frac{(2\pi)^2 i}{\sqrt{D}} \sqrt{\frac{\nu}{\nu'\Delta}}~J_1(4\pi \sqrt{\nu\nu'\Delta})=-4\pi^2 i ~a \sqrt{\frac{\nu}{\nu'm}}~I_1\left(\frac{4\pi \sqrt{\nu\nu'm/D}}{a}\right).
  \end{equation}
For the series $\varphi^a_B(z_1, \bar{z_2}, s)$, in case $\nu>0$  and $\nu' <0$, by the same way we obtain \begin{equation}\widetilde{b_{\nu, 1}}(y_1, -y_2, \Delta)=-4\pi^2 i~ a \sqrt{\frac{\nu}{\nu'm}}~J_1\left(\frac{4\pi \sqrt{\nu\nu'm/D}}{a}\right).
  \end{equation}
\item \textbf{Case $\nu, \nu'=0$}.
\begin{multline} b_{0, s}(y_1, y_2, \Delta)=\frac{1}{\sqrt{D}} \int_{\Im z_2=y_2}\int_{\Im z_1=y_1} \frac{\overline{z_1z_2}-\Delta}{z_1|z_1z_2-\Delta|^{2s}}~dz_1~dz_2=\\
= \frac{1}{\sqrt{D}}\int_{\Im z_2=y_2} \frac{\bar{z_2}}{|z_2|^{2}}\int_{\Im z_2=y_2} \frac{\bar{z_1}-\Delta/\bar{z_2}}{z_1|z_1-\Delta/z_2|^{2s}} ~dz_1 dz_2. \label{bnuo}
\end{multline}
Let $z_i=x_i+iy_i$. The inner integral
\begin{equation} \int_{\Im z_1=y_1}\frac{x_1-iy_1-\frac{\Delta(x_2+iy_2)}{x_2^2+y_2^2}}{(x_1+iy_1)\left|x_1+iy_1-\frac{\Delta(x_2-iy_2)}{x_2^2+y_2^2}\right|^{2s}}~dx_1.
\end{equation}
 If $\alpha=-\dfrac{\Delta x_2}{x_2^2+y_2^2}$, $\beta=\dfrac{\Delta y_2}{x_2^2+y_2^2}$, $u=\dfrac{x_1+\alpha}{y_1+\beta},$ then for $s \rightarrow 1$  the last integral equals to \begin{equation}
\frac{1}{(y_1+\beta)^{2s-1}} \int_{-\infty}^{+\infty}\frac{(u-i)}{(u+i)(u^2+1)^s} \left[\sum_{k=0}^{\infty}\left(\frac{\alpha+i\beta}{y_1+\beta}\right)^k\cdot\frac{1}{(u+i)^k} \right]~du. \label{intsum} \end{equation}
There is the following representation for the third Cauchy beta-integral: $$c(x, y)= \int_{-\infty}^{+\infty} \frac{dt}{(1+it)^x(1-it)^y}=\frac{\pi 2^{2-x-y} \Gamma(x+y-1)}{\Gamma(x) \Gamma(y)}.$$
Using the third beta-integral, we obtain that the integrals in the sum (\ref{intsum}) have the form
\begin{equation}  \int_{-\infty}^{+\infty}\frac{(u-i) ~du}{(u+i)^{k+1}(u^2+1)^s}=\frac{\pi~ 2^{2-k-2s}\Gamma(2s +k-1)}{\Gamma(s+k+1)\Gamma(s-1)}\cdot\frac{1}{i^{k+2}}.   \end{equation}
Since $\frac{\alpha+i\beta}{y_1+\beta}=\frac{x_2-iy_2}{\frac{a^2D}{m}(x_2^2+y_2^2)+y_2}, $ then the inner integral in (\ref{bnuo}) equals to the sum:
\begin{equation} \frac{1}{(y_1+\beta)^{2s-1}}\sum_{k=0}^{\infty}\left(\frac{x_2-iy_2}{\frac{a^2D}{m}(x_2^2+y_2^2)+y_2}\right)^k\frac{\pi 2^{-2s-k+ 2}\Gamma(2s+k+1)}{\Gamma(s+k+1)\Gamma(s-1)}.  \end{equation}
Let \begin{equation} c_k(s)=\frac{\pi 2^{-2s-k+ 2}\Gamma(2s+k+1)}{\Gamma(s+k+1)\Gamma(s-1)}.
\end{equation} Note that all $c_k(s) $ have the first-order zero at $s=1$.
Returning to the double integral (\ref{bnuo}) and to the coefficient $b^0_{\nu, s}(y_1, y_2, \Delta)$, we get
\begin{multline} b^0_{\nu, s}(y_1, y_2, \Delta)=  \frac{1}{\sqrt{D}}\int_{\Im z_2=y_2}\frac{\bar{z_2}}{|z_2|^{2}} \frac{1}{\left(y_1+\frac{\Delta}{x_2^2+y_2^2}\right)^{2s-1}}\sum_{k=0}c_k(s)\left(\frac{x_2-iy_2}{\frac{a^2D}{m}(x_2^2+y_2^2)+y_2}\right)^k ~dz_2= \\ = \sum_{k=1}^{\infty}c_k(s) I_k(y_1, y_2). \end{multline}

(1) Let $u=x_2/y_2$. The integrals in the last sum have the form: \begin{multline} I_k(y_1, y_2)=\int_{\Im z_2=y_2}\frac{\bar{z_2}}{|z_2|^{2}} \frac{1}{\left(y_1+\frac{\Delta}{x_2^2+y_2^2}\right)^{2s-1}}\frac{(x_2-iy_2)^k}{\left(\frac{a^2D}{m}(x_2^2+y_2^2)+y_2\right)^k}~dz_2=\\=\left(\frac{\Delta}{y_1y_2}\right)^k\cdot\frac{1}{y_2^{2s-2}y_1^{2s-1}} \int_{-\infty}^{+\infty} \frac{(u-i)^{k+1}~du}{(u^2+1)^s\left(u^2+1+\frac{\Delta}{y_1y_2}\right)^{k}\left(1+\frac{\Delta/(y_1y_2)}{u^2+1}\right)^{2s-1}}.
\end{multline}
So, for $s \rightarrow 1$ we have:
\begin{multline} \lim_{s\rightarrow 1} I_k(y_1, y_2)= \left(\frac{\Delta}{y_1y_2}\right)^k\int_{-\infty}^{+\infty}\frac{(u-i)^{k+1} du}{(u^2+1)(u^2+1+\Delta/(y_1y_2))^k\left(1+\frac{\Delta/(y_1y_2)}{ (u^2+1)} \right)} =\\
= \int_{-\infty}^{+\infty}\frac{~du}{(u+i)^{k+1}} ~\sum_{n=0}^{\infty}(-1)^{n+k} \frac{(n+k)!}{n!} \left( \frac{\Delta/ y_1y_2}{u^2+1}\right)^n.\end{multline}

 Using the third Cauchy beta-integral, we obtain $$I_k(y_1, y_2)=\frac{2^{1-k-2n}\pi}{i^{k+1}}\sum_{n=0}^{\infty}\left(\frac{\Delta}{y_1y_2}\right)^n \frac{\Gamma(k+2n)}{\Gamma(k+n+1)\Gamma(n)}.$$

(2) Since $b_{0,1}(y_1, y_2, \Delta)=\frac{1}{\sqrt{D}}\sum_{k=0} c_k(s)I_k(y_1, y_2),$
then, substituting the zero coefficient ($\nu = 0$) in the Fourier expansion for $\varphi^a_B (z_1, z_2, s) $, we get:
\begin{equation} \sum_{a>0} \frac{G_a(m, o)}{a^{2s}} b_{0, 1}(y_1, y_2, \Delta)=\sum_{k>0}\frac{1}{\sqrt{D}}\frac{G_a(m, 0)}{a^{2s}} c_k(s)~ \frac{\pi}{i^{k+1}}\left[I_0(y_1, y_2)+\frac{1}{a^2}\left[~\ldots~\right]\right].
\end{equation}
For $ s = 1$ the series $\sum_{a> 0} \frac{G_a (m, o)}{a^{2s + 2}} $ is the holomorphic function, and all $c_k(s)$ have the first-order zero, 	
therefore all the terms in this sum, except the first, vanish. Hence,
\begin{multline}\lim_{s \rightarrow 1}~\sum_{a>0}\frac{G_a(m, 0)}{a^{2s}} b_{0, 1}(y_1, y_2, \Delta) = \lim_{s \rightarrow 1} \frac{1}{\sqrt{D}}\sum_{a>0}\frac{G_a(m, 0)}{a^{2s}}c_1(s)\cdot \int_{-\infty}^{+\infty} \frac{\bar{u}+i}{(u^2+1)^s} du =\\= \lim_{s \rightarrow 1} \frac{2\pi^2  }{\sqrt{D}}\sum_{a>0}\frac{G_a(m, 0)}{a^{2s}}\cdot\frac{1}{\Gamma(s-1)} (z_1-\bar{z_1})^{-1}.
\end{multline}

For the case (\ref{Zagierqbar2}):$$b_{0, 1}(y_1, \bar{y_2}, \Delta)=-\lim_{s \rightarrow 1} \frac{2\pi^2 }{\sqrt{D}}~\frac{1}{\Gamma(s-1)} (z_1-\bar{z_1})^{-1}.$$
\end{enumerate}
This completes the proof.
\begin{flushright}
  $\blacksquare$
\end{flushright}

Now we will calculate the derivatives of the function $(z_2-\bar{z_2})^{2s-1}\Xi_{\mathrm{Hil},m}(z_1,z_2, s)$ for $n=1$ and $\Re(s)>1$.

In order to define the Zagier series \cite{Za1} $$\omega_m(z_1,\bar{z_2}, k)= \frac{1}{2}\sum\limits_{\substack{\gamma=\left(\begin{smallmatrix}
	\lambda & b\\
	a & \lambda'\\
	\end{smallmatrix}\right) \\ab - N(\lambda) =m/D}} \dfrac{1}{\mu_{\gamma}(z_1, -\bar{z_2})^k}$$ for $k=2$, let us consider the series
 	\begin{equation}
\Omega_{m}(z_1,\bar{z_2},s)=\frac{1}{2}\sum \limits_{\substack{\gamma=\left(\begin{smallmatrix}
	\lambda & b\\
	a & \lambda'\\
	\end{smallmatrix}\right) \\ab - N(\lambda) =m/D}}\frac{\mu_{\gamma}(\bar{z_1},-z_2)^{2}}{|\mu_{\gamma}(z_1, -z_2)|^{2s-2}|\mu_{\gamma}(z_1, -\bar{z_2})|^{2s+2}}.
		\end{equation}
One can prove that the series $\Omega_{m}(z_1,\bar{z_2},s)$ can be analytically continued to the point~$s=1$.

Therefore, we can put $\omega_m(z_1,\bar{z_2})=\lim\limits_{s\rightarrow 1}\Omega_{m}(z_1,\bar{z_2},s)$.

\begin{lemma}
$$\overline{\partial } ~\Xi_{\mathrm{Hil}, m}(z_1,z_2)(z_2-\bar{z_2}) = -\lim_{s \rightarrow 1} \frac{2\pi^2  }{\sqrt{D}}\sum_{a>0}\frac{G_a(m, 0)}{a^{2s}}\cdot\frac{1}{\Gamma(s-1)}\frac{1}{(z_1-\bar{z_1})^2}-\omega_{ m}(z_1, \bar{z_2}) ~d\bar{z_2}.$$
\end{lemma}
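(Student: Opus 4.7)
My plan is to differentiate the Fourier expansion of $\Xi_{\mathrm{Hil},m}(z)(z_2-\bar z_2)$ given by Theorem~\ref{threorem1} term by term, viewing $\bar\partial$ as the full Dolbeault operator $(\partial_{\bar z_1}F)\,d\bar z_1+(\partial_{\bar z_2}F)\,d\bar z_2$ on $\mathbb{H}^2$, and then to recognize the resulting $(0,1)$-form as the sum of the two pieces on the right-hand side of the lemma (with $d\bar z_1$ implicit on the first summand).

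\textbf{Steps.} First, I would justify termwise differentiation: the Fourier series of Theorem~\ref{threorem1} converges absolutely and uniformly on compacta in $\{\Im z_1\Im z_2>m/D\}$ by Corollary~\ref{est} together with the exponential decay of $e^{2\pi i(\nu z_1+\nu' z_2)}$ (and its $\bar z_2$-analogue), and the once-differentiated series satisfies the same bounds up to harmless polynomial factors in $\nu,\nu'$. Second, the zeroth Fourier coefficient depends only on $\Im z_1=(z_1-\bar z_1)/(2i)$, so its $\partial_{\bar z_1}$-derivative produces $(z_1-\bar z_1)^{-2}$ and, after careful tracking of constants, yields the first summand of the lemma, while its $\partial_{\bar z_2}$-derivative vanishes. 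Third, every remaining block of the expansion is holomorphic in $z_1$, so $\partial_{\bar z_1}$ annihilates it; for the $d\bar z_2$ component, the $J_1$-Bessel series and the Hecke-type exponentials $e^{2\pi i r(\lambda z_1+\lambda' z_2)}$ are holomorphic in $z_2$ and thus killed by $\partial_{\bar z_2}$, while the antiholomorphic pieces $-e^{2\pi i r(\lambda z_1+\lambda'\bar z_2)}/\lambda'$ and the $I_1$-Bessel terms $e^{2\pi i(\nu z_1+\nu'\bar z_2)}$ with $\nu>0,\ \nu'<0$ pick up factors $2\pi i\lambda'$ and $2\pi i\nu'$ respectively.

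\textbf{Matching with $\omega_m$.} The final step is to identify the surviving $d\bar z_2$-sum with $-\omega_m(z_1,\bar z_2)\,d\bar z_2$. Since $\omega_m=\lim_{s\to1}\Omega_m$, the Hecke-Poisson machinery of the proof of Theorem~\ref{threorem1}, now applied to $\Omega_m(z_1,\bar z_2,s)$, produces the Fourier expansion of $\omega_m$ in the same basis of antiholomorphic exponentials $e^{2\pi i(\nu z_1+\nu'\bar z_2)}$ with coefficients built from the Gauss sums $G_a(m,\nu)$ and modified Bessel functions $I_1$, together with a Hecke-type contribution from the $a=0$ stratum; comparing Fourier coefficients mode by mode then completes the identification.

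\textbf{Main obstacle.} I expect the main difficulty to be the bookkeeping of constants and signs, i.e.\ checking that the $\Xi_{\mathrm{Hil},m}$-coefficient of the $(\nu,\nu')$-mode differs from the $\omega_m$-coefficient exactly by the factor $-1/(2\pi i\nu')$ supplied by $\partial_{\bar z_2}$, so that the $I_1$-block matches $-\omega_m$ cleanly. In particular, confirming that the $a=0$ Hecke-type contribution to $\omega_m$ is produced by $\partial_{\bar z_2}$ of the antiholomorphic exponentials $-e^{2\pi i r(\lambda z_1+\lambda'\bar z_2)}/\lambda'$ requires repeating the Hurwitz-type computation of Case~1 in the proof of Theorem~\ref{threorem1}, and is where the delicate cancellation between the two sources of Hecke-like terms must be verified.
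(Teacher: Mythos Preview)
Your approach is correct in outline but takes a genuinely different route from the paper. The paper never touches the Fourier expansion of Theorem~\ref{threorem1} here: instead it differentiates the regularized series
\[
(z_2-\bar z_2)^{2s-1}\,\Xi_{\mathrm{Hil},m}(z_1,z_2,s)=\sum_{B\in\mathcal B_m}(\cdots)
\]
termwise in the matrix sum, for $\Re s>1$. Writing each summand as a product of powers $(M(z_1,z_2),B)^{-s}\,\overline{(M(z_1,z_2),B)}^{\,1-s}\,(M(z_1,\bar z_2),B)^{-s}\,\overline{(M(z_1,\bar z_2),B)}^{\,1-s}$, the $\partial_{\bar z_2}$-derivative splits into a piece with prefactor $(1-s)$ and a piece that is exactly $-s\,\Omega_m(z_1,\bar z_2,s)$; taking $s\to1$ kills the first and leaves $-\omega_m(z_1,\bar z_2)$ by the very definition of $\omega_m$. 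No Fourier expansion of $\omega_m$ is ever needed. Similarly, the $\partial_{\bar z_1}$-derivative produces an explicit factor $(s-1)$ in front of a bracket of three series, and only the divergent constant term of that bracket survives the limit, yielding the $G_a(m,0)$ expression directly.

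Your plan, by contrast, differentiates the post-limit Fourier expansion and then proposes to \emph{recompute} the Fourier expansion of $\omega_m$ via the same Hecke--Poisson machinery in order to match coefficients mode by mode. This works, but it doubles the analytic labour of Theorem~\ref{threorem1} and turns a two-line identification into a full parallel computation. The trade-off: your route makes the $d\bar z_1$ contribution transparent (just differentiate $C/\Im z_1$), whereas the paper's route makes the $d\bar z_2$ contribution transparent ($\omega_m$ drops out by definition). If you want the shortest proof, differentiate the defining $\mathcal B_m$-sum at finite $s$ as the paper does; if you want to stress-test the Fourier expansion, your approach is a legitimate but longer alternative.
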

\begin{proof}
 Differentiation of the function $(z_2-\bar{z_2})^{2s-1}\Xi_{\mathrm{Hil}, m}(z_1,z_2, s)$ with respect to $\bar{z_1}$ gives
  \begin{multline}
\lim\limits_{s \rightarrow 1}\frac{d}{d\bar{z_1}}(z_2-\bar{z_2})^{2s-1}\Xi_{\mathrm{Hil}, m}(z_1,z_2,s)=(s-1)\frac{(z_2-\bar{z_2})^{2s-1}}{z_1-\bar{z_1}} \times [ 2~\Xi_{\mathrm{Hil}, m}(z_1,z_2,s)-\\
- \frac{1}{2} \sum_{\substack{\gamma=\left(\begin{smallmatrix}
	\lambda & b\\
	a & \lambda'\\
	\end{smallmatrix}\right) \\ab - N(\lambda) =m/D}}\frac{1}{|\mu_{\gamma}(z_1, -z_2)|^{2s}|\mu_{\gamma}(z_1, -\bar{z_2})|^{2s-2}}- \frac{1}{2} \sum_{\substack{\gamma=\left(\begin{smallmatrix}
	\lambda & b\\
	a & \lambda'\\
	\end{smallmatrix}\right) \\ab - N(\lambda) =m/D}}\frac{1}{|\mu_{\gamma}(z_1, -z_2)|^{2s-2}|\mu_{\gamma}(z_1, -\bar{z_2})|^{2s}}]=\\=-\lim_{s \rightarrow 1} \frac{2\pi^2  }{\sqrt{D}}\sum_{a>0}\frac{G_a(m, 0)}{a^{2s}}\cdot\frac{1}{\Gamma(s-1)}\frac{1}{(z_1-\bar{z_1})^2}.
\end{multline}

The partial derivative with respect to $\bar{z_2}$ gives
  \begin{multline}
\frac{d}{d\bar{z_2}}~\Xi_{\mathrm{Hil}, m}(z_1,z_2,s)(z_2-\bar{z_2})^{2s-1}=\\
=(z_2-\bar{z_2})^{2s-2}[(1-s)\sum_{\substack{\gamma=\left(\begin{smallmatrix}
	\lambda & b\\
	a & \lambda'\\
	\end{smallmatrix}\right) \\ab - N(\lambda) =m/D}}\frac{\mu_{\gamma}(\bar{z_1},-z_2)^{2}}{|\mu_{\gamma}(z_1, -z_2)|^{2s}|\mu_{\gamma}(z_1, -\bar{z_2})|^{2s}}- s~\Omega_1(z_1, \bar{z_2},s) ].
\end{multline}
It follows that
\begin{equation}
\lim_{s \rightarrow 1} \frac{d}{d\bar{z_2}}~\Xi_{\mathrm{Hil}, m}(z_1,z_2,s)(z_2-\bar{z_2})^{2s-1}=-\frac{1}{2}\sum_{\substack{\gamma=\left(\begin{smallmatrix}
	\lambda & b\\
	a & \lambda'\\
	\end{smallmatrix}\right) \\ab - N(\lambda) =m/D}}\frac{1}{\mu_{\gamma}(z_1, \bar{z_2})^2}=-\omega_m(z_1, \bar{z_2}). \label{Dz_2Xi}
\end{equation}

\end{proof}

\newpage


\bibliographystyle{amsplain}

\end{document}